\documentclass[12pt,a4paper]{article}
\usepackage[T1]{fontenc}
\usepackage[left=2.5cm, right=2.5cm, top=3cm, bottom=3cm]{geometry}

\usepackage{color}
\usepackage{amsmath} 
\usepackage{amsfonts} 
\usepackage{amsthm} 
\usepackage{amssymb}
\usepackage{mathrsfs}
\usepackage{mathtools}
\usepackage{enumitem}
\usepackage{babel}
\usepackage{hyperref}

\def\P{\mathbb{P}}

\def\R{{\mathbb{R}}}

\def\N{{\mathbb{N}}}
\def\Z{{\mathbb{Z}}}

\def\E{{\mathbb{E}}}

\def\d{\mathrm{d}}

\newcommand{\Lcal}   {{\mathcal L }}

\usepackage{amsmath}
\usepackage{relsize}
\newtheorem{theorem}{Theorem}

\newtheorem{proposition}[theorem]{Proposition}
\newtheorem{lemma}[theorem]{Lemma}
\newtheorem{corollary}[theorem]{Corollary}

\usepackage{colordvi}
\usepackage{amsfonts}
\usepackage{graphicx}
\usepackage{epsfig}
\usepackage{subcaption}
\usepackage{color}
\usepackage{mathtools, cuted}
\usepackage{url}
\usepackage{wrapfig}
\usepackage{tikz}
\usepackage{enumitem}

\newcommand{\1}{{\mathchoice {1\mskip-4mu\mathrm l}      
		{1\mskip-4mu\mathrm l}
		{1\mskip-4.5mu\mathrm l} {1\mskip-5mu\mathrm l}}}

\makeatletter
\renewcommand*{\p@section}{\S\,}
\renewcommand*{\p@subsection}{\S\,}
\renewcommand*{\p@subsubsection}{\S\,}
\makeatother

\usetikzlibrary{arrows,matrix,backgrounds,fit,calc,automata,shapes,positioning,decorations.pathreplacing}
\usetikzlibrary{calc}

\tikzset{
	between/.style args={#1 and #2}{
		at = ($(#1)!0.5!(#2)$)
	}
}

\begin{document}
	
	\title{Extinction Time Estimate for Subcritical (Binary) Homogeneous Crump-Mode-Jagers Processes}

	\author{
		Sima Mehri	}
	\maketitle
	\begin{abstract}
		Homogeneous Crump-Mode-Jagers processes   is considered with arbitrary initial population. The asymptotic distribution of the extinction time in  the limit of large initial population  is obtained in subcritical regime by assuming the existence of Malthusian parameter.
	\end{abstract}

\section{Crump-Mode-Jagers Processes}
In \cite{crump1968general, crump1969general, 
	nerman1981convergence},   the Crump-Mode-Jagers Process is described as follows: 
Let $I$ be the set of all individuals  indexes. Assume one ancestor indexed by $0$ is born at time $0$ and each individual with index $i\in I$ is born at some time $\tau _i \geq 0$ and lives for a random lifetime $\tau _i > 0$ with distribution function $F, (F(t):=\P(\tau_i\leq t))$. During its life, individual $i$ produces offspring at random times according to a random  point process $\xi_i$ (i.e. locally finite non-negative integer-valued Borel measure) on $[0,\tau_i)$, measured relative to its own birth time.   
Assume for every $i\in I$,  $(\xi_i, \tau _i)$ is independent of $\tau _i$, and the reproduction-death processes $\{(\xi_i, \tau _i)\}$ are i.i.d.\ copies of some generic reproduction process $(\xi,\tau)$.
The population counting process is defined by
$
Z_t $
the number of individuals alive at time $t$.
Let $Z_t(r)$ be the number of individuals at time $t$ with an age less than or equal to $r$.
Crump \& Mode \cite[Theorem 6.1]{crump1968general} showed the expected population size satisfies a renewal-type equation as follows: 
If $m(t) := \E[\xi([0,t])]$ denotes the expected number of births an individual produces up to age $t$, then
$
M(t) := \E[Z_t]
$
obeys the renewal equation
\[
M(t) = 1 - F(t) + \int_0^t M(t-s) \d m(s).
\]
Taking Laplace transforms, $\Lcal(M)(s):=\int_0^\infty M(t)e^{-st}\d t$,
\[
\Lcal(M)(s) = \frac{1-\E[e^{-s\tau }]}{s\left(1 - \Lcal(m)(s)\right)}.
\]
The exponential growth of the process is determined by the Malthusian parameter $\alpha $, which is the unique solution (if it exists) to
\[
\Lcal(m)(\alpha)=\int_0^\infty e^{-\alpha t} \d m(t) = 1.
\]

Let  $\xi:=\xi((0,\infty])$ be the number of offspring generated by an individual and  $m:=m(\infty)=\E[\xi((0,\infty])]$ be its mean.  Similar to Galton-Watson and Bellman-Harris processes, the CMJ processes are also classified in supercritical, critical, subcritical or
explosive cases according to the mean offspring number $m$,  as $1 <m< \infty$, $m= 1$, $m < 1$, or $m = \infty$, respectively.

\paragraph{Subcritical Case ($m<1$):} Assume Malthusian parameter $\alpha<0$ exists. According to \cite[page 132]{haccou2005branching}, under quite natural conditions, including $\E[\xi\log(\xi)] <\infty$, it holds true that
\begin{equation}\label{Equ-subCMJ-goal}
	\P(T>t\mid Z_0=Z)=\P(Z_t>0\mid Z_0=Z)\sim cZe^{\alpha t}, \qquad \text{as } t\to \infty
\end{equation}
for some constant $c\in (0,\infty)$.   

\paragraph{Critical Case ($m=1$):}

Assume $m=1$, $\sigma^2:=\mathrm{Var}[\xi]=\E[\xi^2]-1<\infty$,  $a:=\int_0^\infty t\d m(t)>0 $, and $t^2(1-m(t))\to 0$,  $t^2(1-F(t))\to 0$ as $t\to \infty$. Then by \cite[page 304]{holte1974extinction}, there holds
\[\lim_{t\to \infty} t\,\P(T>t)=\lim_{t\to \infty} t\,\P(Z_t>0)=2a\sigma^{-2}. \] 
\paragraph{Supercritical Case ($m(\infty)>1$):} Assume measure $ \int_\cdot\d m$ is non-lattice,  Malthusian parameter $\alpha\in (0,\infty)$ exists, and $\int_0^\infty te^{-\alpha t}\d m(t) <\infty$.  \cite[Corrollary 3.2]{nerman1981convergence} states under these assumptions, for all $r\in (0,\infty]$, 
\[e^{-\alpha t}Z_t(r)\to \frac{\int_0^r e^{-\alpha t}(1-F(t)) \d t}{\int_0^\infty te^{-\alpha t}\d m(t)}\cdot V \text{ in probability, as  } t\to \infty.\]
where $\E[V]=1$ and $ \P(V = 0) = \P(T<\infty) < 1$ is the extinction probability.

Doney \cite{doney1972Age} considered the 
special case of CMJ process where given $\tau $, the point process $\xi$ on $[0,\tau)$ is  inhomogeneous Poisson point process with rate $\lambda(x)$ for $x\in [0,\tau )$.

Define 
\[h(s):=\int_0^\infty e^{(s-1)\int_0^u \lambda (v) \d v}\d F(u)\]
and let $q$ be the smallest root $s\in[0,1]$ of $h(s)=s$. Then \cite[Theorem 4.1]{doney1972Age} states if $q=0$, $\forall t\geq 0: \P(T\leq t)=0$; if $q>0$, $\P(T\leq t)\uparrow q$ as $t\to \infty$. Therefore, $\P(T<\infty)=q$ the smallest root $s\in[0,1]$ of $h(s)=s$. 

\subsection{CMJ with Homogeneous Poisson  Reproduction Point Process}	
Lambert \cite{lambert2010contour} and Richard \cite[Chapter 2]{richard2011arbres} investigated splitting trees corresponding to the CMJ processes where given lifetime $\tau $, the point process $\xi$ on $[0,\tau)$ is homogeneous Poisson point process with constant rate $\lambda$. In the literature, this process  is called binary homogeneous CMJ process,   since at each birth time only single offspring  is born. They assumed the process starts by single individual at time $0$ with remaining lifetime random variable, say $\tau_R$, and denoted conditional  probability measure given $\tau_R=\chi, \chi\in \R$ by $\P_\chi$.   	The expected number of births an individual produces up to age $t$ is $$m(t)=\E[\xi([0,t])]=\E\left[\lambda\int_0^t \1_{\{s<\tau \}}\d s\right]=\lambda \int_0^t(1-F(s))\d s$$
Then Malthusian parameter $\alpha $ is the unique solution  (if it exists) to 
\begin{equation} \label{Equ-Malthusian}1=\int_0^\infty e^{-\alpha t}\d m(t).\end{equation}
There exists at most one solution to \eqref{Equ-Malthusian},  because the right hand side of \eqref{Equ-Malthusian} is strictly decreasing with respect to $\alpha$. 
We have 
\begin{align*}
	\int_0^\infty e^{-\alpha t}\d m(t)&=\lambda \int_0^\infty e^{-\alpha t}(1-F(t))\d t\intertext{ using integration by parts, we get}&= \frac{\lambda }{\alpha}\left[[-e^{-\alpha t}(1-F(t))]\mid_{0}^\infty- \int_0^\infty e^{-\alpha t}\d F(t)\right]\\&=\frac{\lambda }{\alpha}\left[1-\E[e^{-\alpha \tau }]\right].
\end{align*}
So Malthusian parameter satisfies equation
\begin{equation}\label{Equ-malthus}\lambda \E[e^{-\alpha \tau }]=\lambda-\alpha.\end{equation}

Three cases may happen: supercritical case $\lambda\E[\tau ]>1$, (then $\alpha>0$ exists \cite[page 33]{richard2011arbres})  or subcritical case $\lambda\E[\tau ]<1$  (then $\alpha<0$ may exist or not) or critical case $ \lambda\E[\tau]=1$ ($\alpha=0$). Proposition 5.6 in \cite{lambert2010contour} 
states the probability of extinction is
\begin{equation}
	\label{Equ-LambertExtPrChi} \P_\chi(T<\infty)=e^{-\alpha_+ \chi}
\end{equation}
and extinction time has distribution
\begin{equation}
	\label{Equ-LambertExtDistChi}
	\P_\chi(T\leq t)=\P_\chi(Z_t=0)=\frac{W(t-\chi)\1_{\{\chi\leq t\}}}{W(t)}.\end{equation}
where the function $W$ is strictly increasing and called the \emph{scale function} (\cite[page 11]{lambert2010contour}) and has Laplace transform
\[\Lcal(W)(s):=\int_{0}^\infty W(r)e^{-sr}\d r=\frac{1}{\psi(s)}\]
where
$$\psi(s):=s-\lambda \int_0^\infty (1-e^{-sr})\d F(r)=s-\lambda +\lambda \E[e^{-s\tau }],$$ defined in \cite[page 27]{lambert2010contour}.  By \cite[Lemma 2.12(iii)]{richard2011arbres}, 
 $W$ is differentiable with derivative $W^\prime$ satisfying
\begin{equation}\label{Equ-W-Wprime}
	\lambda W(x)-W^\prime (x)=\lambda\int_0^x W(x-y)\d F (y).
\end{equation}
Lambert \cite[Proposition 5.6]{lambert2010contour} also states conditional on being non-zero, $Z_t$ has a geometric distribution with success probability $1/W(t)$, i.e.
$$\P_\chi(Z_t=k\mid Z_t>0)=\frac{1}{W(t)} \left(1-\frac{1}{W(t)}\right)^{k-1}, \qquad k\in \N:=\{1,2,3,\ldots\}.$$

According to \cite[Proof of Proposition 5.8]{lambert2010contour} for subcritical ($\alpha<0$) or supercritical cases ($\alpha>0$), there holds:
\begin{equation}\label{Equ-W-Lim}
	\lim_{t\to\infty }e^{-\alpha_+ t}W(t)=	\frac{1}{1-\lambda\E[\tau e^{-\alpha_+ \tau }]}\, .
\end{equation}
and in critical case 
\[\lim_{t\to \infty }\frac{W(t)}{t}=\frac{2}{ \psi^{\prime\prime}(0+)}\]
where $\psi^{\prime\prime}$ is second derivative of $\psi$. .

\paragraph{Subcritical Case ($\lambda\E[{\tau}] <1$):} Assume $m:=\lambda\E[{\tau}]<1$. \cite[Proposition 5.8 (i)]{lambert2010contour} states
$$\lim_{t\to \infty}\P(Z_t=k\mid Z_t>0)=(1-m )m^{k-1}, \qquad k\in \N.$$

\paragraph{Critical Case ($\lambda\E[{\tau}] =1$):} Provided $\lambda\E[{\tau}] =1$ and $\lambda \E[\tau^2]=\lambda \int_0^\infty r^2\d F(r)<\infty$, \cite[Proposition 5.8 (i)]{lambert2010contour} states
$$\forall x\geq 0 :\qquad \lim_{t\to \infty}\P(Z_t/t\geq x\mid Z_t>0)=e^{-\psi^{\prime\prime}(0+)x/2},  $$
where $\psi(s):=s-\lambda \int_0^\infty (1-e^{-sr})\d F(r)=s-\lambda +\lambda \E[e^{-s\tau }]$  defined on \cite[page 27]{lambert2010contour}.
\paragraph{Supercritical Case ($\lambda\E[\tau ] >1$):} 
Assume population starts with one ancestor born at time zero, i.e. $Z_0=\1_{[0,\infty)}$. If $\lambda\E[\tau ] >1$ then 
according to \cite[Proposition 2.1(i) with assumption made on page 33]{richard2011arbres},  
$\P(T<\infty)=\E[e^{-\alpha \tau }]=1-\alpha/\lambda$
and conditional on $\{T=\infty\}$, 
\[e^{-\alpha t}Z_t\to \mathrm{Exp}(c)\quad \text{ almost surely as } \ t\to \infty,\]
where $c:=\psi^\prime (\alpha)=1-\lambda \E[\tau e^{-\alpha \tau }]$ and $\mathrm{Exp}(c)$ is an exponential random variable with parameter $c$.

In this paper, we derive the Gumbel  distribution of extinction time estimate error for large initial population  in Theorem \ref{Thm-Gumbel Limit}. 

\section{Main Results for Binary Homogeneous CMJ  Processes}
Consider an age-dependent  birth-death process where each individual has a constant birth rate  $\lambda$ and a lifetime, denoted by the random variable $\tau $ with a differentiable cumulative distribution function $F(x)=\P(\tau \leq x)$.  The instantaneous death rate, or hazard rate, for an individual of age $x$ is given by:
\[\mu(x):=\frac{F^\prime(x)}{1-F(x)},\]
with notation $F^\prime$ as derivative of function $F$. 
We assume that $x\mapsto \mu(x)\in \R$ is a continuous function. 
Let $Z_t$ be the number of alive individuals at time t, and $Z_t(r)$ be the number of alive individuals at time $t$ with age less than or equal to $r$. 
We also consider the number of individuals strictly younger than age $r$ by taking the limit $Z_t(r^-):=\lim_{s\nearrow r}Z_t(s)$.  Let $\int_0^\infty f(r)\d Z_t(r) $ denote the Lebesgue-Stieltjes  integral of a measurable function $f$  with respect to monotone and right-continuous function $Z_t(\cdot)$. Since $Z_t(\cdot)$ is non-decreasing integer-valued and right-continuous. We have
\begin{equation}\label{Equ-intdef}\int_0^\infty f(r)\d Z_t(r)=\sum_{\{x\geq 0: Z_t(x)\neq Z_t(x^-)\}}f(x)\left(Z_t(x)- Z_t(x^-)\right)=\sum_{i=1}^{Z_t}f(R_i(t)).\end{equation}
where the random variables
$$R_i(t):=\inf\{r\geq 0: Z_t(r)\geq i\}, \quad  i\in \N$$ 
are the ordered population ages   at time $t$.

Let $T$ be extinction time, 
\[
T := \inf \{ t \geq 0 : Z_t = 0 \},
\]
with $T=\infty$ if extinction never occurs. Define  $$q (x,t):= \P(T\leq    t\mid Z_0=\1_{[x,\infty)})= \P(Z_t=0\mid Z_0=\1_{[x,\infty)})$$ which is distribution function of extinction time given only one individual exists at time zero with age $x$. 
Since the extinction time for each person's lineage is independent of the others, we have by equality \eqref{Equ-intdef} that
\[\begin{aligned}\P(T\leq t)&=\E\left[\Pi_{1\leq i\leq Z_0}(\P(T\leq t\mid  Z_0=\1_{[R_i(0),\infty)}))\right]=\E\left[\Pi_{1\leq i\leq Z_0}q(R_i(0),t)\right]\\&=\E\left[\exp\left(\sum_{i=1}^{Z_0}\log(q(R_i(0),t)) \right)\right]=\E\left[\exp\left(\int_0^\infty\log\left(q\left(x,t\right)\right) \d Z_0( x)\right)\right].\end{aligned}\]

\begin{proposition}
	\label{Thm-Solution}
	There holds
	\begin{equation}
		\begin{split}	q(x,t)&=\P(T\leq t\mid Z_0=\1_{[x,\infty)})=\frac{\int_x^{x+t} W(t+x-s)\d F(s)}{(1-F(x))W(t)}\\&=1+\frac{\lambda \int_0^x (1-F(y))W^\prime(t+x-y)\d y -W^\prime(t+x)}{\lambda(1-F(x))W(t)}.
		\end{split}
	\end{equation}
	where the function $W$ is the scale function  defined in \cite[page 11]{lambert2010contour} having Laplace transform
	\begin{equation}\label{Equ-LaplaceW}\Lcal(W)(s):=\int_{0}^\infty W(r)e^{-sr}\d r=\frac{1}{\lambda \left( \E[e^{-s\tau }]-1\right)+s}.\end{equation}
\end{proposition}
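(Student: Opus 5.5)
We reduce to Lambert's formula \eqref{Equ-LambertExtDistChi} by conditioning on the remaining lifetime. An individual of age $x$ at time $0$ is alive, so its lifetime satisfies $\tau>x$. Given this, its remaining lifetime $\chi=\tau-x$ has law $\P(\tau\in x+\d\chi)/(1-F(x))$. Because births occur as a homogeneous Poisson process with rate $\lambda$, the future evolution given $\chi$ is independent of the past age (memorylessness of the Poisson clock). The population started from this individual is therefore exactly Lambert's splitting tree started from one individual with remaining lifetime $\chi$, i.e. it has law $\P_\chi$. Hence
\[
q(x,t)=\int_{(x,\infty)}\P_{s-x}(T\le t)\,\frac{\d F(s)}{1-F(x)}.
\]
By \eqref{Equ-LambertExtDistChi}, $\P_{s-x}(T\le t)=W(t+x-s)\1_{\{s\le x+t\}}/W(t)$. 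Substituting gives the first identity $\int_x^{x+t}W(t+x-s)\,\d F(s)/((1-F(x))W(t))$. The Laplace transform \eqref{Equ-LaplaceW} is just $1/\psi$ with $\psi(s)=s-\lambda+\lambda\E[e^{-s\tau}]$, as recalled above. We take $W(y)=0$ for $y<0$, consistent with the indicator.

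For the second expression, we rewrite $\int_x^{x+t}W(t+x-s)\,\d F(s)$ as the full convolution minus the part on $[0,x]$:
\[
\int_0^{x+t}W(t+x-s)\,\d F(s)-\int_0^{x}W(t+x-s)\,\d F(s).
\]
The first term, by \eqref{Equ-W-Wprime} with argument $t+x$, equals $W(t+x)-W'(t+x)/\lambda$. For the second term we write $\d F(s)=-\d(1-F(s))$ and integrate by parts on $[0,x]$:
\[
\int_0^x W(t+x-s)\,\d F(s)=W(t+x)-(1-F(x))W(t)-\int_0^x(1-F(s))\,W'(t+x-s)\,\d s .
\]
The boundary terms are $-(1-F(x))W(t)+(1-F(0))W(t+x)$, with $F(0)=0$ since $\tau>0$, and $\partial_s W(t+x-s)=-W'(t+x-s)$. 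Subtracting, the $W(t+x)$ terms cancel and we are left with
\[
(1-F(x))W(t)-\frac{W'(t+x)}{\lambda}+\int_0^x(1-F(y))\,W'(t+x-y)\,\d y .
\]
Dividing by $(1-F(x))W(t)$ yields exactly the second formula.

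The main obstacle is the first step: justifying rigorously that conditioning on age $x$ reduces to $\P_\chi$ with $\chi\sim\mathcal L(\tau-x\mid\tau>x)$. This requires the independence of the Poisson birth clock from $\tau$ and the strong Markov/branching structure of the splitting tree. The integration by parts also needs $W$ to be continuous and differentiable, which is supplied by the differentiability statement preceding \eqref{Equ-W-Wprime}, together with $F$ having a density.
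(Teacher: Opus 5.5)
Your proposal is correct and follows the paper's proof essentially step for step. You condition on the remaining lifetime $\tau-x$ given $\tau>x$ and apply Lambert's formula \eqref{Equ-LambertExtDistChi} to get the first expression; then you split off $\int_0^x W(t+x-s)\,\d F(s)$, evaluate the full convolution via \eqref{Equ-W-Wprime}, and integrate by parts on $[0,x]$. (Your conditional law $\P(\tau\in x+\d\chi)/(1-F(x))$ is stated more accurately than the paper's, which writes $F(r+x)/(1-F(x))$ in place of $(F(r+x)-F(x))/(1-F(x))$; only $\d F$ enters either way, so this does not affect the result.)
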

\begin{proof} If there exists single individual at time zero with age $x$, its remaining lifetime, $\tau_R$ has the following distribution function: 
 $$\P(\tau _R\leq r\mid Z_0=\1_{[x,\infty)})=\P(\tau \leq r+x\mid \tau \geq x)=\frac{F(r+x)}{1-F(x)}.$$
	By Equation \eqref{Equ-LambertExtDistChi}, there holds
	\begin{align*}	q(x,t)&=\P(T\leq t\mid Z_0=\1_{[x,\infty)})=\E[\P_{\tau _R}(T\leq t)\mid Z_0=\1_{[x,\infty)}]\\&=\E\left[\frac{\1_{\{\tau _R\leq t\}}W(t-\tau _R)}{W(t)}\mid Z_0=\1_{[x,\infty)}\right]\\&=\int_0^\infty \1_{\{r\leq t\}}\frac{W(t-r)}{W(t)(1-F(x))}\d F(r+x)=\frac{\int_x^{x+t} W(t+x-s)\d F(s)}{(1-F(x))W(t)}\intertext{by equality \eqref{Equ-W-Wprime}, $ W^\prime(t+x)=\lambda W(t+x)-\lambda \int_0^{x+t} W(t+x-y)\d F(y)$, so we get }&=\frac{1}{\lambda(1-F(x))W(t)}\left[\lambda W(t+x)-\lambda \int_0^x W(t+x-y)\d F(y) -W^\prime(t+x)\right]\intertext{by integration by parts with functions $u(y):=\lambda W(t+x-y), v(y):=(1-F(y))$,  we get $\int_0^x \lambda  W(t+x-y)\d F(y)=-\int_0^xu(y)\d v(y)=-u(x)v(x)+u(0)v(0)+\int_0^x u^\prime(y) v(y)\d y=\lambda W(t+x)-\lambda W(t)(1-F(x))-\lambda \int_0^x (1-F(y))W^\prime(t+x-y)\d y$, so}&=1+\frac{\lambda \int_0^x (1-F(y))W^\prime(t+x-y)\d y -W^\prime(t+x)}{\lambda(1-F(x))W(t)}.
	\end{align*}
\end{proof}

In order to derive asymptotic behaviour for tail distribution of extinction time, we use the final value theorem which describes the asymptotic behaviour of a function according to its Laplace transform. Any complex number $a\in \mathbb{C}$ such that $\lim_{s\to a}\Lcal(f)(s)\notin \mathbb{C}$ is called pole of Laplace transform $\Lcal(f)$. The pole $a$ is called single if $\lim_{s\to a}(s-a)\Lcal(f)(s)<\infty$. 

We first derive the limit of $e^{-\alpha t}W^\prime(t)$ in Proposition \ref{Prop-Lim-W-Prime}, then we derive the limit for $e^{-\alpha t}(1-q(t,x))  $ and state it by multiplying to $(1-F(x))e^{-\alpha x}$ as a limit uniformly over $x\in \R_+$ in Proposition \ref{Cor-lim-g2}. By having the uniform limit, we derive extinction time estimate in Theorem \refeq{Thm-Gumbel Limit}.

\begin{theorem}[Standard Final Value Theorem] \cite{chen2007final}\label{Thm-FinalValue}
Assume for measurable function $f:\R_+\to \R$,  every pole of the Laplace transform $\Lcal(f)$ 
is either in the open left half plane or at the origin, and that 
$\Lcal(f)$  has at most a single pole at the origin. Then  $\lim_{t\to\infty}f(t)$ exists and $$\lim_{t\to\infty}f(t)=\lim_{s\to 0}s\Lcal(f)(s). $$
\end{theorem}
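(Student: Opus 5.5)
The statement is the standard final value theorem, cited from \cite{chen2007final}. My plan is to deduce it from the partial-fraction structure of $\Lcal(f)$ together with classical Abelian/Tauberian facts. First I would fix the setting in which ``pole'' makes sense. $\Lcal(f)$ converges in some right half plane, and I take the hypothesis to mean that it extends meromorphically to a neighbourhood of the closed right half plane $\{\Re s\ge 0\}$. Its only possible singularity there is a simple pole at $0$. So I would write
\[
\Lcal(f)(s)=\frac{c}{s}+G(s),\qquad c:=\lim_{s\to0}s\Lcal(f)(s),
\]
where $G$ is analytic on an open set containing $\{\Re s\ge 0\}$. Since $\frac{c}{s}=\Lcal(c\1_{[0,\infty)})(s)$, the function $g:=f-c$ has Laplace transform $G$. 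The claim then reduces to showing $g(t)\to0$ as $t\to\infty$.

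Second, I would show $g(t)\to 0$ by inverse Laplace transformation along a contour shifted into the left half plane. For $\sigma>0$ large, the Bromwich formula gives
\[
g(t)=\frac1{2\pi i}\int_{\sigma-i\infty}^{\sigma+i\infty}G(s)e^{st}\d s .
\]
Since $G$ has no poles in $\{\Re s\ge0\}$, I would move the line of integration to $\Re s=-\delta$, or to a curve hugging the imaginary axis from the left. The shifted integral is bounded by $e^{-\delta t}$ times a constant, which tends to $0$. Alternatively, one can move only to $\Re s=0$ and apply the Riemann--Lebesgue lemma to $\int G(iy)e^{iyt}\d y$, which again gives $g(t)\to0$.

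The main obstacle is exactly this contour shift. It needs decay of $G$ on horizontal segments and integrability of $G$ along the new line, and neither follows from the bare pole hypothesis. For an arbitrary measurable $f$ the statement is in fact false as literally stated: $f(t)=\sin(t^2)$-type examples, or $\Lcal(f)$ that is entire but grows along vertical lines, break it. So I would make explicit the implicit regularity assumptions of \cite{chen2007final}. Either $\Lcal(f)$ is a rational function, or more generally $f$ is such that $f'$ exists and is locally integrable with at most exponential growth. In the rational case the argument is elementary. Partial fractions express $f$ as $c$ plus a finite sum of terms $t^ke^{p t}$ with $\Re p<0$, and each such term tends to $0$.

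In the general case, the route I would try first is the Ingham--Karamata Tauberian theorem (as in Newman's proof of the prime number theorem). If $g$ is bounded and $G$ extends analytically to a neighbourhood of $\{\Re s\ge0\}$, then $\int_0^\infty g$ converges. Applying this to $g'$, which requires boundedness of $f'$, gives that $\int_0^\infty g'$ converges, so $\lim_{t\to\infty}g(t)$ exists. That limit must equal $\lim_{s\to0}sG(s)=0$ by the Abelian direction of the final value theorem. This gives $\lim_{t\to\infty}f(t)=c=\lim_{s\to0}s\Lcal(f)(s)$. In the later applications to $e^{-\alpha t}W'(t)$, I would verify these regularity conditions directly from \eqref{Equ-W-Wprime} and \eqref{Equ-LaplaceW}.
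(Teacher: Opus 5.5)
The paper gives no proof of this statement. It is quoted from \cite{chen2007final} and used as a black box, so there is no argument of the paper to match; your proposal supplies the standard one. Your central observation is correct and is the most valuable part: as transcribed (arbitrary measurable $f$, hypotheses only on poles) the statement is false. For $f(t)=\sin(t^2)$, rotating the contour to $\arg t=\pm\pi/4$ shows that $\Lcal(f)$ continues to an entire function. So the pole hypothesis holds vacuously and $s\Lcal(f)(s)\to 0$, while $f$ has no limit. Even more trivially, $f=\1_{\N}$ has $\Lcal(f)\equiv 0$. Hence in your rational case, partial fractions identify $f$ with $c+\sum_k P_k(t)e^{p_k t}$ only almost everywhere. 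You need $f$ continuous, or you must settle for an essential limit; with that added, the rational case is genuinely the setting where the standard theorem holds.

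One of your repaired hypotheses is too weak. The condition ``$f'$ exists, is locally integrable and of at most exponential growth'' is satisfied by $\sin(t^2)$ itself. This is exactly where the textbook step $\int_0^\infty f'=\lim_{s\to0}\bigl(s\Lcal(f)(s)-f(0)\bigr)$ fails. What your argument actually uses, and what suffices, is the Tauberian condition that $f$ is absolutely continuous with $f'$ bounded, together with holomorphy of $G=\Lcal(f)-c/s$ on a neighbourhood of $\{\mathrm{Re}\,s\ge 0\}$. Then $\Lcal(f')(s)=c-f(0)+sG(s)$ is holomorphic there. Newman's analytic theorem gives $\int_0^\infty f'=\Lcal(f')(0)=c-f(0)$ directly, so your Abelian step is not even needed. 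The contour-shift route stays conditional, as you say.

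What your version buys over the bare citation is honest hypotheses, and these matter in the paper's only use of the theorem. For $f(t)=e^{-\alpha t}W'(t)$ the transform is irrational. By Pringsheim--Landau applied to $e^{-\alpha y}\d F(y)$, it is meromorphic near $s=0$ only if $\E[e^{-(\alpha-\delta)\tau}]<\infty$ for some $\delta>0$. Differentiating \eqref{Equ-W-Wprime} shows that boundedness of $f'$ requires control of $e^{-\alpha t}F'(t)$. Neither condition is assumed in Proposition \ref{Prop-Lim-W-Prime}, so the verification you defer is real work. There the key renewal theorem is the more natural tool. Integrating \eqref{Equ-W-Wprime} by parts gives $W'=\lambda(1-F)+\lambda(1-F)*W'$, so $e^{-\alpha t}W'(t)$ solves a proper renewal equation with kernel $\lambda e^{-\alpha y}(1-F(y))\d y$.
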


\begin{proposition}[Subcritical Case]\label{Prop-Lim-W-Prime}
Assume Malthusian parameter $\alpha<0$ exists, $\tau $ is non-lattice, and $\lambda\E[\tau e^{-\alpha  \tau }]\neq 1$. There holds
$$\lim_{t\to \infty}W^\prime(t) e^{-\alpha t}=
\frac{\alpha }{1-\lambda\E[\tau e^{-\alpha  \tau }]} \, .$$
Specially, it follows that	$\lim_{t\to \infty}W^\prime(t)=0$.
\end{proposition}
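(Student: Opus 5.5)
Apply the Standard Final Value Theorem (Theorem~\ref{Thm-FinalValue}) to $f(t):=e^{-\alpha t}W^\prime(t)$. The work is computing $\Lcal(f)$, locating its poles, and reading off the residue at the origin.

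\textbf{Step 1: the transform of $W^\prime$.} Since $W(0)=1/\lim_{s\to\infty}s\Lcal(W)(s)$ and $\psi(s)/s\to 1$, we have $W(0)=1$. Hence
\[
\Lcal(W^\prime)(s)=s\Lcal(W)(s)-W(0)=\frac{s}{\psi(s)}-1=\frac{\lambda-\lambda\E[e^{-s\tau}]}{\psi(s)},
\]
with $\psi(s)=s-\lambda+\lambda\E[e^{-s\tau}]$. One could also obtain this from \eqref{Equ-W-Wprime} by taking Laplace transforms: $\lambda\Lcal(W)-\Lcal(W^\prime)=\lambda\E[e^{-s\tau}]\Lcal(W)$, which gives the same formula. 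Shifting by $\alpha$,
\[
\Lcal(f)(s)=\Lcal(W^\prime)(s+\alpha)=\frac{\lambda\left(1-\E[e^{-(s+\alpha)\tau}]\right)}{\psi(s+\alpha)}.
\]

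\textbf{Step 2: the pole at the origin.} By \eqref{Equ-malthus}, $\psi(\alpha)=0$, and the numerator at $s=0$ equals $\lambda-\lambda\E[e^{-\alpha\tau}]=\alpha\neq0$. Moreover $\psi^\prime(\alpha)=1-\lambda\E[\tau e^{-\alpha\tau}]\neq0$ by hypothesis, so the pole at the origin is simple. Therefore
\[
\lim_{s\to0}s\Lcal(f)(s)=\frac{\alpha}{\psi^\prime(\alpha)}=\frac{\alpha}{1-\lambda\E[\tau e^{-\alpha\tau}]},
\]
which is the claimed limit. The ``specially'' part then follows: $W^\prime(t)=e^{\alpha t}\cdot e^{-\alpha t}W^\prime(t)$ is a product of a bounded factor and $e^{\alpha t}\to0$, since $\alpha<0$.

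\textbf{Step 3 (main obstacle): no other poles with $\Re s\ge 0$.} We must show that $\psi(s+\alpha)\neq0$ for $\Re s\ge0$, $s\neq0$. We also need $\E[e^{-(s+\alpha)\tau}]$ to be finite there. This holds because $\Re(s+\alpha)\ge\alpha$ and $\E[e^{-\alpha\tau}]=1-\alpha/\lambda<\infty$.

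Write $z=s+\alpha=a+ib$ with $a\ge\alpha$, and suppose $\psi(z)=0$, that is, $\lambda\E[e^{-z\tau}]=\lambda-z$. Taking moduli gives
\[
|\lambda-z|=\lambda\left|\E[e^{-z\tau}]\right|\le\lambda\E[e^{-a\tau}].
\]

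\emph{Real part.} The function $a\mapsto\lambda\E[e^{-a\tau}]-(\lambda-a)=\psi(a)$ is convex and vanishes at $a=\alpha$. Its derivative there is $\psi^\prime(\alpha)$, and one checks $\psi^\prime(\alpha)>0$. For this, use the hypotheses together with $\psi(0)=0$, the fact that $\alpha<0$ is the other root, and convexity. Consequently $\psi(a)>0$ for $a>\alpha$, i.e.\ $\lambda\E[e^{-a\tau}]>\lambda-a$.

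\emph{Imaginary part.} From the modulus inequality, $|\lambda-z|^2=(\lambda-a)^2+b^2\le\lambda^2\E[e^{-a\tau}]^2$.
\begin{itemize}
\item If $a=\alpha$, this forces $b=0$ unless the inequality $|\E[e^{-z\tau}]|\le\E[e^{-a\tau}]$ is strict. Non-lattice of $\tau$ gives strict inequality whenever $b\neq0$, so equality at $a=\alpha$ forces $b=0$, i.e.\ $s=0$.
\item If $a>\alpha$, the modulus bound alone is not decisive, because $\psi(a)>0$ points the wrong way. Here I would argue differently, via the real part. We have $\Re(\lambda-z)=\lambda-a$ and $\Re\,\lambda\E[e^{-z\tau}]=\lambda\E[e^{-a\tau}\cos(b\tau)]$. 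Combine these with the expression $\psi(z)=z\int_0^\infty e^{-zr}(1-\lambda\bar F\ast\text{-type kernel})$, i.e.\ use $\psi(z)/z=1-\lambda\int_0^\infty e^{-zr}(1-F(r))\d r$. A zero with $z\neq0$ then requires $\lambda\int_0^\infty e^{-zr}(1-F(r))\d r=1$. Taking modulus, this is at most $\lambda\int_0^\infty e^{-ar}(1-F(r))\d r<1$ for $a>\alpha$, by strict monotonicity and the Malthusian equation \eqref{Equ-Malthusian}.
\end{itemize}
The $a=\alpha$, $b\neq0$ case is excluded by the strict inequality from non-lattice. This reformulation through $\Lcal(m)$ handles all cases cleanly, so I would present Step 3 in that form.
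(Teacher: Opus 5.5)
Your overall plan is the same as the paper's. You apply Theorem~\ref{Thm-FinalValue} to $f(t)=e^{-\alpha t}W^\prime(t)$, with $\Lcal(f)(s)=\lambda(1-\E[e^{-(s+\alpha)\tau}])/\psi(s+\alpha)$ and residue $\alpha/\psi^\prime(\alpha)$ at the origin. Steps 1--2 are fine; the initial value theorem gives $W(0)=\lim_{s\to\infty}s\Lcal(W)(s)$, not its reciprocal, but both equal $1$ here.

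\textbf{The gap is in Step 3.} The target you set, $\psi(s+\alpha)\neq 0$ for $\mathrm{Re}\,s\ge 0$, $s\neq 0$, is false. Since $\psi(0)=-\lambda+\lambda=0$, the denominator vanishes at $s=-\alpha$, which lies in the open right half-plane. Your $\Lcal(m)$ argument only treats zeros with $z\neq 0$, so this point is never examined. Yet the final value theorem requires you to show it is not a pole. The paper handles it by L'H\^opital. In your language, cancel the common factor $z$ from numerator and denominator:
\[
\Lcal(f)(s)=\frac{\Lcal(m)(s+\alpha)}{1-\Lcal(m)(s+\alpha)},\qquad \Lcal(m)(z):=\lambda\int_0^\infty e^{-zr}(1-F(r))\,\d r .
\]
At $s=-\alpha$ this equals $\lambda\E[\tau]/(1-\lambda\E[\tau])$, which is finite by subcriticality. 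In this form your bound $|\Lcal(m)(z)|\le\Lcal(m)(\mathrm{Re}\,z)<1$ for $\mathrm{Re}\,z>\alpha$ covers $z=0$ automatically.

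The same misconception produces false claims in your ``real part'' paragraph. Since $\psi$ is strictly convex with zeros at $\alpha<0$ and at $0$, it is negative on $(\alpha,0)$, not positive on $(\alpha,\infty)$. In fact $\psi^\prime(\alpha)=\alpha\lambda\int_0^\infty re^{-\alpha r}(1-F(r))\,\d r<0$, i.e.\ $\lambda\E[\tau e^{-\alpha\tau}]>1$. With exponential lifetimes, for instance, $\psi^\prime(\alpha)=\alpha/\lambda$. So $\alpha/\psi^\prime(\alpha)>0$, as it must be since $W$ is increasing. That paragraph should be dropped; nothing else depends on it.

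With this patch, your Step 3 is a genuinely better route than the paper's case analysis, which uses real parts, H\"older with an auxiliary $\mathrm{Exp}(\lambda)$ variable, and convexity. For $\mathrm{Re}(s+\alpha)>0$ the real-part inequality $\lambda-\mathrm{Re}\,z\le\lambda\E[e^{-\mathrm{Re}(z)\tau}]$ is actually true. So no argument using real parts alone can exclude zeros there, and the paper's convexity case runs the monotonicity in the wrong direction. Your modulus bound on $\Lcal(m)$ disposes of the whole half-plane $\mathrm{Re}\,z>\alpha$ at once. On the line $\mathrm{Re}\,z=\alpha$, your inequality $(\lambda-\alpha)^2+b^2\le(\lambda-\alpha)^2$ already forces $b=0$. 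Locating the poles this way therefore needs no non-lattice hypothesis on $\tau$, since the relevant measure $\d m=\lambda(1-F(r))\,\d r$ is absolutely continuous.
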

\begin{proof}
We use Theorem \ref{Thm-FinalValue} to prove the result. 
Let $f(t):=W^\prime (t)e^{-\alpha t}$.  By Equation \eqref{Equ-LaplaceW} and property of Laplace transform for function's derivative, 
\begin{align*}
\Lcal(f)(s)&=\int_0^\infty W^\prime(t)e^{(-\alpha-s) t}\d t=\Lcal(W^\prime)(s+\alpha)\\&=(s+\alpha)\Lcal(W)(s+\alpha)-W(0)\\&=\frac{\lambda(1-\E[e^{-(\alpha+s)\tau }])}{s+\alpha-\lambda(1-\E[e^{-(\alpha+s)\tau }])}\end{align*}
We claim that every pole of the Laplace transform $\Lcal(f)$ 
is either in the open left half plane or at the origin, and that 
$\Lcal(f)$  has at most a single pole at the origin. First, we observe  that  $s=-\alpha$ is not a pole of $\Lcal(f)(s)$, since by  L'H\^{o}pital rule,
\begin{align*}
\lim_{s\to -\alpha }\Lcal(f)(s)&=\lim_{s\to -\alpha}\frac{\lambda(1-\E[e^{-(\alpha+s)\tau }])}{s+\alpha-\lambda(1-\E[e^{-(\alpha+s)\tau }])}=\lim_{s\to -\alpha}\frac{\lambda\E[\tau e^{-(\alpha+s)\tau }]}{1-\lambda\E[\tau e^{-(\alpha+s)\tau }]}\\&=\frac{\lambda\E[\tau ]}{1-\lambda\E[\tau]}\in \mathbb{C}\end{align*}
which is finite value because $1>\lambda \E[\tau ]$ in subcritical case. 
The poles set of  $\Lcal(f)(s)$ is subset of
\[A:=\{s\in \mathbb{C}: \lambda-\alpha-s=\lambda \E[e^{-(\alpha+s)\tau }]\}\setminus \{-\alpha\}\]
where the denominator of $\Lcal(f)$ is zero.

Let $\mathrm{Re} (s)\in \R$ and $\mathrm{Im}(s)\in \R$ denote respectively the real part and imaginary part  of complex number $s\in \mathbb{C}$, i.e. $s=\mathrm{Re}(s)+\mathbf{i}\, \mathrm{Im}(s)$, $\mathbf{i}:=\sqrt{-1}$.  To show that the poles  of $\Lcal(f)$ belongs to open left half plane or at the origin, it is enough to show that if $s\in A$ then either $\mathrm{Re} (s)<0$ or $s=0$, or equivalently if $s\in A$ with $\mathrm{Re}(s)\geq 0$, then $s=0$. Assume $s\in A$ with $\mathrm{Re}(s)\geq 0$. So by definition of set $A$, there holds $\lambda -\alpha-s=\lambda \E[e^{-(\alpha+s)\tau }]$.  Since $\lambda, \alpha, \tau \in \R$, 
\begin{align*}
\lambda-\alpha-\mathrm{Re}(s)&=\mathrm{Re}(\lambda-\alpha-s)=\mathrm{Re}\left(\lambda\E\left[e^{-(\alpha+s)\tau }\right]\right)=\mathrm{Re}\left(\lambda\E\left[e^{-(\alpha+\mathrm{Re}(s)+\mathrm{Im}(s)\mathbf{i})\tau}\right]\right)\\&=\mathrm{Re}\left(\lambda\E\left[e^{-(\alpha+\mathrm{Re}(s))\tau}(\cos(\mathrm{Im}(s)\tau )-\mathbf{i}\, \sin(\mathrm{Im}(s)\tau ))\right]\right)\\&=	\lambda \E\left[e^{-(\alpha+\mathrm{Re}(s))\tau }\cos (\mathrm{Im}(s)\tau )\right]\\&\leq 	\lambda \E\left[e^{-(\alpha+\mathrm{Re}(s))\tau }\right].\end{align*}
In the last inequality, we used $\cos (\mathrm{Im}(s)\tau )\leq 1$. We want to show that  $s=0$. In order to use Holder inequality, we define $X$ to be exponential random variable with parameter $\lambda$, independent of $\tau$. Then it follows that
$\E\left[e^{(\alpha+\mathrm{Re}(s))X}\right]=\lambda/(\lambda-\alpha-\mathrm{Re}(s))$ and so $\E\left[e^{(-\alpha-\mathrm{Re}(s))(\tau -X)}\right]\geq 1$.

We consider three cases: 

\paragraph{Case $-\alpha>\mathrm{Re}(s)>0$:} We have $0<(-\alpha-\mathrm{Re}(s))/(-\alpha)<1$ and  by Holder  inequality,
\[1\leq  \E\left[e^{(-\alpha-\mathrm{Re}(s))(\tau -X)}\right] <\left(\E\left[e^{-\alpha(\tau -X)}\right]\right)^{(-\alpha-\mathrm{Re}(s))/(-\alpha) }\]
The last inequality is  strict, since $\tau -X$ is not a constant random variable. Hence $\E\left[e^{-\alpha(\tau -X)}\right]>1$ which contradicts with equation \eqref{Equ-malthus},  $\E\left[e^{-\alpha(\tau -X)}\right]= \lambda \E[e^{-\alpha \tau}]/(\lambda-\alpha)=1$. 

\paragraph{Case $-\alpha=\mathrm{Re}(s)$:} Since $s=-\alpha $ is not a pole, we have $\mathrm{Im}(s)\neq 0$. 
\begin{align*}
\lambda&=	\lambda-\alpha-\mathrm{Re}(s)=\mathrm{Re}(\lambda-\alpha-s)=\mathrm{Re}\left(\lambda\E\left[e^{-(\alpha+s)\tau }\right]\right)\\&=	\lambda \E\left[e^{-(\alpha+\mathrm{Re}(s))\tau }\cos (\mathrm{Im}(s)\tau )\right]=\lambda \E\left[\cos (\mathrm{Im}(s)\tau )\right]< 	\lambda.\end{align*}
which is contradiction. In the last strict inequality, we used non-lattice property of $\tau $ so that for the lattice $L:=\{2k\pi/\mathrm{Im}(s), k\in \N\} $, $\P(\tau \in  (\R\setminus L))>0$ and on $\{\tau \in (\R\setminus L)\}$, it holds $\cos(\mathrm{Im}(s)\tau)<1$, therefore $\E\left[\cos (\mathrm{Im}(s)\tau )\right]<1$. 

\paragraph{Case $-\alpha< \mathrm{Re}(s)$:}
Since for the function $\psi(r):=\E\left[e^{r(\tau -X)}\right]$, we have $$\psi^\prime(0)=\E[\tau -X]=\E[\tau ]-1/\lambda<0,$$ and  $$\psi''(r)=\E[(\tau-X)^2e^{r(\tau -X)}]\geq 0.$$ So for all $r<0$, $\psi^\prime(r)\leq \psi^\prime (0)<0$ and therefore  $1\leq \psi(-\alpha-\mathrm{Re}(s))< \psi(0)=1$ which contradicts with $\psi(-\alpha-\mathrm{Re}(s))=\E\left[e^{(-\alpha-\mathrm{Re}(s))(\tau -X)}\right]\geq 1$.

\paragraph{Case  $\mathrm{Re} (s)=0$ and $\mathrm{Im}(s)\neq 0$:} So  $s=\mathrm{Im}(s)\, \mathbf{i}$.  Then 
\[\lambda-\alpha=\mathrm{Re}\left(\lambda\E\left[e^{-(\alpha+\mathrm{Im}(s)\, \mathbf{i} )\tau }\right]\right)=\lambda\E\left[e^{-\alpha \tau }\cos(\mathrm{Im}(s)\tau )\right]<	\lambda\E\left[e^{-\alpha \tau }\right].\]
In the last strict inequality,  we used non-lattice property of $\tau $ so that for the lattice $L:=\{2k\pi/\mathrm{Im}(s), k\in \Z\} $, $\P(\tau \in  (\R\setminus L))>0$ and on $\{\tau \in (\R\setminus L)\}$, it holds $\cos(\mathrm{Im}(s)\tau)<1$. 
This contradicts with $\E\left[e^{-\alpha(\tau -X)}\right]= 1$. 

All above three cases lead to a contradiction.  Hence, every pole of the Laplace transform $\Lcal(f)$ 
is either in the open left half plane or at the origin. 

By  L'H\^{o}pital rule, we  get
\begin{align*}\lim_{s\to 0}s\Lcal(f)(s)&=\lim_{s\to 0}\frac{s\lambda}{s+\alpha-\lambda(1-\E[e^{-(\alpha+s)\tau }])}\times \lim_{s\to 0 }(1-\E[e^{-(\alpha+s)\tau }])\\&=\lim_{s\to 0}\frac{\lambda}{1-\lambda\E[\tau e^{-(\alpha+s)\tau }]} \times (1-\E[e^{-\alpha \tau }]) = \frac{\alpha}{1-\lambda\E[\tau e^{-\alpha \tau }]}\,. \end{align*} 

Since by assumption of Theorem, $\lambda \E[\tau e^{-\alpha \tau }]\neq 1$, therefore $\lim_{s\to 0}s\Lcal(f)(s)\in \R$ and  $\Lcal(f)$  has at most single pole at $s=0$. Hence, every pole of the Laplace transform $\Lcal(f)$ 
is either in the open left half plane or at the origin, and that 
$\Lcal(f)$  has at most a single pole at the origin. By Theorem  \ref{Thm-FinalValue}, $\lim_{t\to\infty}f(t)$ exists and $$\lim_{t\to\infty}f(t)=\lim_{s\to 0}s\Lcal(f)(s) $$ which yields
$$\lim_{t\to \infty}W^\prime(t) e^{-\alpha t}=
\frac{\alpha }{1-\lambda\E[\tau e^{-\alpha  \tau }]} \, .$$
\end{proof}
We use the following Lemma in the proof of next Theorem. 
\begin{lemma}\label{Lemma-MidEqulality}
Assume Malthusian parameter $\alpha\neq 0$ exists, i.e. there exists $\alpha\neq 0$ so that 
$\lambda-\alpha=\lambda\int_0^\infty e^{-\alpha y}\d F(y)$.
 There holds
\[\E\left[e^{-\alpha (\tau -x)}-1\mid \tau >x\right]=\frac{-\alpha e^{\alpha x}}{\lambda(1-F(x))}\left[-\lambda \int_0^x (1-F(y))e^{-\alpha y}\d y +1\right].\]
\end{lemma}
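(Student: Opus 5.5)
The plan is to compute the left-hand side directly from the conditional law of $\tau$ given $\tau>x$, then use integration by parts together with the Malthusian relation \eqref{Equ-malthus} to reach the stated form. Writing the conditional expectation explicitly,
\[
\E\left[e^{-\alpha(\tau-x)}-1\mid \tau>x\right]=\frac{e^{\alpha x}\int_x^\infty e^{-\alpha y}\d F(y)-(1-F(x))}{1-F(x)} .
\]
It therefore suffices to identify the numerator with $-\frac{\alpha e^{\alpha x}}{\lambda}\left[1-\lambda\int_0^x(1-F(y))e^{-\alpha y}\d y\right]$. (If $\alpha<0$, finiteness of the integral is part of the existence of $\alpha$.)

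\textbf{Step 1: use the Malthusian relation.} By the hypothesis $\lambda-\alpha=\lambda\int_0^\infty e^{-\alpha y}\d F(y)$, we may write
\[
\int_x^\infty e^{-\alpha y}\d F(y)=\frac{\lambda-\alpha}{\lambda}-\int_0^x e^{-\alpha y}\d F(y).
\]

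\textbf{Step 2: integrate by parts on the finite interval.} Take $u(y)=e^{-\alpha y}$ and $v(y)=1-F(y)$, as in the proof of Proposition \ref{Thm-Solution}. This gives
\[
\int_0^x e^{-\alpha y}\d F(y)=1-e^{-\alpha x}(1-F(x))-\alpha\int_0^x e^{-\alpha y}(1-F(y))\d y .
\]

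\textbf{Step 3: combine.} Substituting Step 2 into Step 1 and then into the numerator, the terms $\pm(1-F(x))$ cancel. What remains is
\[
e^{\alpha x}\left[\frac{\lambda-\alpha}{\lambda}-1+\alpha\int_0^x e^{-\alpha y}(1-F(y))\d y\right]
=-\frac{\alpha e^{\alpha x}}{\lambda}\left[1-\lambda\int_0^x(1-F(y))e^{-\alpha y}\d y\right],
\]
which is exactly the required identity after dividing by $1-F(x)$.

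The main obstacle is only bookkeeping: the signs in the integration by parts, and justifying that the tail integral $\int_x^\infty e^{-\alpha y}\d F(y)$ is finite so the splitting in Step 1 is legitimate. The latter follows from the existence of $\alpha$ via \eqref{Equ-malthus}. The hypothesis $\alpha\neq0$ is not needed for the algebra itself.
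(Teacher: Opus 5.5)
Your proof is correct and is essentially the paper's argument. Both use the same integration by parts on $[0,x]$ with $v=1-F$ (implicitly using $F(0)=0$ since $\tau>0$), together with the Malthusian relation to exchange $\int_0^x e^{-\alpha y}\,\d F(y)$ for the tail $\int_x^\infty e^{-\alpha y}\,\d F(y)$. The only difference is direction: you expand the left-hand side, while the paper expands the right-hand side.
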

\begin{proof}
Using integration by parts for the functions $u(y)=-e^{-\alpha y}$ and $v(y)=1-F(y)$, we get 
\begin{align*}\int_0^x (1-F(y))(\alpha e^{-\alpha y})\d y &=\int_0^x u^\prime (y)v(y)\d y=u (x)v(x)-u(0)v(0)-\int_0^x u(y)\d v(y)\\&=  -(1-F(x))e^{-\alpha x}+1-\int_0^x e^{-\alpha y}\d F(y)
\end{align*}

So 
\begin{align*}\MoveEqLeft[2]\frac{-\alpha e^{\alpha x}}{\lambda(1-F(x))}\left[-\lambda \int_0^x (1-F(y))e^{-\alpha y}\d y +1\right]\\&=\frac{ e^{\alpha x}}{\lambda(1-F(x))}\left[-\lambda(1-F(x)) e^{-\alpha x}+\lambda -\alpha -\lambda\int_0^x e^{-\alpha y}\d F(y)\right]\intertext{	since $\lambda-\alpha=\lambda\int_0^\infty e^{-\alpha y}\d F(y)$,}&=\frac{e^{\alpha x} }{(1-F(x))}\left[ -(1-F(x))e^{-\alpha x}+\int_x^\infty e^{-\alpha y}\d F(y) \right]\\&=\E\left[e^{-\alpha (\tau -x)}-1\mid \tau >x\right].
\end{align*}

\end{proof}
\begin{proposition}\label{Cor-lim-g2} 
Assume Malthusian parameter $\alpha<0 $ exists,  $\tau $ is non-lattice, and \linebreak $\lambda\E[\tau e^{-\alpha  \tau }]\neq 1$. Let $p(x,t):=\P(T> t\mid Z_0=\1_{[x,\infty)})=1-q(x,t)$ denote the survival probability of binary homogeneous CMJ process at time $t$ starting with a single individual having age $x$ at time zero. There holds
\begin{itemize}
\item[(a)]
$\displaystyle\lim_{t\to \infty }p(x,t)e^{-\alpha t}
=\frac{1-\lambda\E[\tau ]}{\lambda \E[\tau e^{-\alpha \tau }]-1}\E\left[e^{-\alpha (\tau -x)}-1\mid \tau >x\right].$

\item[(b)]$\displaystyle\lim_{t\to\infty}\sup_{x\geq 0}\Bigg\lvert (1-F(x))p(x,t)e^{-\alpha (t+x)} -\frac{(1-\lambda \E[\tau ])\E\left[\left( e^{-\alpha \tau }-e^{-\alpha x}\right)\1_{ \{\tau >x\}}\right]}{\lambda\E[\tau e^{-\alpha \tau }]-1}\Bigg\rvert=0$.
\end{itemize}
\end{proposition}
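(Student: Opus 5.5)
From Proposition \ref{Thm-Solution}, the survival probability of a single individual of age $x$ can be written as
\[
p(x,t)=1-q(x,t)=\frac{W^\prime(t+x)-\lambda\int_0^x(1-F(y))W^\prime(t+x-y)\,\d y}{\lambda(1-F(x))W(t)} .
\]
So the plan is to multiply by $e^{-\alpha t}$ and split it as
\[
(1-F(x))p(x,t)e^{-\alpha(t+x)}=\frac{1}{\lambda\, W(t)}\left[e^{-\alpha(t+x)}W^\prime(t+x)-\lambda\int_0^x(1-F(y))e^{-\alpha y}\,e^{-\alpha(t+x-y)}W^\prime(t+x-y)\,\d y\right],
\]
then insert the limits of $W$ and $W^\prime$.

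\textbf{Limits of the ingredients.}
\begin{itemize}
\item Since $\alpha<0$, \eqref{Equ-W-Lim} with $\alpha_+=0$ gives $W(t)\to 1/(1-\lambda\E[\tau])$.
\item Proposition \ref{Prop-Lim-W-Prime} gives $g(u):=e^{-\alpha u}W^\prime(u)\to L:=\alpha/(1-\lambda\E[\tau e^{-\alpha\tau}])$.
\end{itemize}

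\textbf{Part (a).} For fixed $x$, $g(t+x)\to L$ and $g(t+x-y)\to L$ for each $y\in[0,x]$. Since $g$ is continuous and convergent, it is bounded on $[t_0,\infty)$, so dominated convergence on the finite interval $[0,x]$ lets us pass to the limit inside the integral. The bracket then tends to
\[
L\Big[1-\lambda\int_0^x(1-F(y))e^{-\alpha y}\,\d y\Big].
\]
Lemma \ref{Lemma-MidEqulality} identifies this with $L\cdot\frac{\lambda(1-F(x))}{-\alpha e^{\alpha x}}\,\E[e^{-\alpha(\tau-x)}-1\mid\tau>x]$. Dividing by $\lambda W(t)\to\lambda/(1-\lambda\E[\tau])$ and by $(1-F(x))e^{-\alpha x}$ gives
\[
\frac{(1-\lambda\E[\tau])}{-\alpha}\,L\;\E[\cdots]=\frac{1-\lambda\E[\tau]}{\lambda\E[\tau e^{-\alpha\tau}]-1}\,\E[\cdots],
\]
which is (a). Multiplying back by $(1-F(x))e^{-\alpha x}$ shows that the limit in (b) is
\[
\frac{(1-\lambda\E[\tau])}{\lambda\,(-\alpha)}\,L\,\Big[\alpha\int_0^x(1-F(y))e^{-\alpha y}\,\d y\cdot\lambda/\alpha\,\cdots\Big],
\]
which matches the $\E[(e^{-\alpha\tau}-e^{-\alpha x})\1_{\{\tau>x\}}]$ expression via the same Lemma.

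\textbf{Part (b), uniformity.} This is the main obstacle, because the integral $\int_0^x(1-F(y))e^{-\alpha y}\,\d y$ involves $e^{-\alpha y}$, which grows in $y$ since $-\alpha>0$. The key point is that the integral converges as $x\to\infty$: by \eqref{Equ-malthus} and the integration by parts in Lemma \ref{Lemma-MidEqulality}, $\lambda\int_0^\infty(1-F(y))e^{-\alpha y}\,\d y=1$, and $\E[e^{-\alpha\tau}]<\infty$. So the measure $\nu(\d y):=(1-F(y))e^{-\alpha y}\,\d y$ is finite, with total mass $1/\lambda$.

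With this, the error is bounded by
\[
\Big|\frac{1}{\lambda W(t)}-\frac{1-\lambda\E[\tau]}{\lambda}\Big|\cdot\sup_u|g|\,(1+\lambda\,\nu(\R_+)) \;+\; \frac{C}{W(t)}\Big[\sup_{u\ge t}|g(u)-L|+\lambda\int_0^x|g(t+x-y)-L|\,\nu(\d y)\Big].
\]
Since $t+x-y\ge t$ for $y\le x$, the last integral is at most $\lambda\,\nu(\R_+)\sup_{u\ge t}|g(u)-L|=\sup_{u\ge t}|g(u)-L|$, uniformly in $x$. Here $\sup_u|g|<\infty$ on $[t,\infty)$ for large $t$, and $W(t)\ge W(0)>0$ because $W$ is increasing. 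Every term tends to $0$ uniformly in $x\ge0$, which gives (b).
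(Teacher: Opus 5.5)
Your proposal is correct and follows essentially the same route as the paper: write $p(x,t)$ through $W'$ via Proposition~\ref{Thm-Solution}, take limits using Proposition~\ref{Prop-Lim-W-Prime}, \eqref{Equ-W-Lim} and Lemma~\ref{Lemma-MidEqulality} for (a), and for (b) use $t+x-y\ge t$ together with $\lambda\int_0^\infty(1-F(y))e^{-\alpha y}\,\d y=1$ to bound the error, uniformly in $x$, by $\sup_{u\ge t}|e^{-\alpha u}W'(u)-L|$ plus the error in $1/W(t)$. Your dominated-convergence justification in (a) supplies a step the paper passes over silently, and your garbled display identifying the limit in (b) is harmless, since $(1-F(x))e^{-\alpha x}\,\E[e^{-\alpha(\tau-x)}-1\mid\tau>x]=\E[(e^{-\alpha\tau}-e^{-\alpha x})\1_{\{\tau>x\}}]$ holds directly.
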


\begin{proof}

(a)	Since $p(x,t)=1-q(x,t)$, by  Proposition \ref{Thm-Solution},  we have
\begin{equation}\label{Equ-p-in-terms-Wprime}
\begin{split}	p(x,t)=\frac{-\lambda \int_0^x (1-F(y))W^\prime(t+x-y)\d y +W^\prime(t+x)}{\lambda(1-F(x))W(t)}.
\end{split}
\end{equation}
So, we can write
\begin{align*}
\lim_{t\to \infty }p(x,t)e^{-\alpha t}&=\lim_{t\to \infty }\frac{e^{-\alpha  t}}{\lambda(1-F(x))W(t)}\left[-\lambda \int_0^x (1-F(y))W^\prime(t+x-y)\d y +W^\prime(t+x)\right]\intertext{By Proposition \ref{Prop-Lim-W-Prime} and Equation \eqref{Equ-W-Lim}, we have respectively
$\lim_{t\to \infty}W^\prime(t) e^{-\alpha t}=
\alpha /(1-\lambda\E[\tau e^{-\alpha  \tau }])$ and 	$\lim_{t\to \infty}W(t) =1/(1-\lambda \E[\tau ])$ which yield}&= \frac{-\alpha e^{\alpha x} (1-\lambda \E[\tau ])}{\lambda(\lambda\E[\tau e^{-\alpha \tau }]-1)(1-F(x))}\left[-\lambda \int_0^x (1-F(y))e^{-\alpha y}\d y +1\right]\intertext{By  equality $\E\left[e^{-\alpha (\tau -x)}-1\mid \tau >x\right]=\frac{-\alpha e^{\alpha x}}{\lambda(1-F(x))}\left[-\lambda \int_0^x (1-F(y))e^{-\alpha y}\d y +1\right]$ of Lemma \ref{Lemma-MidEqulality}, we get}&=\frac{1-\lambda \E[\tau ]}{\lambda\E[\tau e^{-\alpha \tau }]-1}\cdot \E\left[e^{-\alpha (\tau -x)}-1\mid \tau >x\right].
\end{align*}

(b) By Lemma \ref{Lemma-MidEqulality}, we have
$$\E\left[e^{-\alpha (\tau -x)}-1\mid \tau >x\right]=\frac{-\alpha e^{\alpha x}}{\lambda(1-F(x))}\left[-\lambda \int_0^x (1-F(y))e^{-\alpha y}\d y +1\right].$$
By multiplying both sides by $(1-F(x))e^{-\alpha x}$, we get
\begin{equation}\label{Mid-Calcul}\E\left[\left( e^{-\alpha \tau }-e^{-\alpha x}\right)\1_{ \{\tau >x\}}\right]=\frac{-\alpha }{\lambda}\left[-\lambda \int_0^x (1-F(y))e^{-\alpha y}\d y +1\right].\end{equation}

We have by equality \eqref{Mid-Calcul}, and Proposition \ref{Thm-Solution}, 
\begin{align*}
\MoveEqLeft[2]\lim_{t\to\infty}\sup_{x\geq 0}\left\lvert (1-F(x))p(x,t)e^{-\alpha (t+x)}-\frac{(1-\lambda \E[\tau ])\E\left[\left( e^{-\alpha \tau }-e^{-\alpha x}\right)\1_{ \{\tau >x\}}\right]}{\lambda\E[\tau e^{-\alpha \tau }]-1}\right\rvert\\ &=\lim_{t\to \infty }\sup_{x\geq 0}\Bigg\lvert \frac{e^{-\alpha (t+x)}}{\lambda W(t)}\left[-\lambda \int_0^x (1-F(y))W^\prime(t+x-y)\d y +W^\prime(t+x)\right]\\&\qquad \qquad\qquad  -\frac{\alpha(1-\lambda\E[\tau ])}{\lambda(1-\lambda \E[\tau e^{-\alpha \tau }])}\times\left[-\lambda \int_0^x (1-F(y))e^{-\alpha y}\d y +1\right]\Bigg\rvert \intertext{By  putting corresponding  terms together, we get}&\leq \lim_{t\to \infty }\sup_{x\geq 0}\Bigg\lbrace  \int_0^x (1-F(y))e^{-\alpha y}\left \lvert \frac{W^\prime(t+x-y)e^{-\alpha (t+x-y)}}{W(t)}-\frac{\alpha(1-\lambda \E[\tau ]) }{1-\lambda\E[\tau e^{-\alpha \tau }]}\right\rvert \d y\\&\qquad  +\frac{1}{\lambda } \left \lvert \frac{W^\prime(t+x)e^{-\alpha (t+x)}}{W(t)}-\frac{\alpha (1-\lambda\E[\tau ])}{1-\lambda\E[\tau e^{-\alpha \tau }]}\right\rvert\Bigg\rbrace \\&=\lim_{t\to \infty}\sup_{r\geq  t}\left\lvert \frac{e^{-\alpha r}W^\prime(r)}{W(t)}-\frac{\alpha (1-\lambda\E[\tau ]) }{1-\lambda \E[\tau e^{-\alpha \tau }]}\right\rvert\cdot  \left[ \int_0^\infty (1-F(y))e^{-\alpha y}\d y +\frac{1}{\lambda} \right]\intertext{By subtracting and adding the term $\alpha /(W(t)(1-\lambda \E[\tau e^{-\alpha \tau }]))$ inside absolute value of the first factor, and using equation of Malthusian parameter, i.e. $\int_0^\infty (1-F(y))e^{-\alpha y}\d y =1/\lambda $, for the second factor, we get}
&\leq   \frac{2}{\lambda}\lim_{t\to \infty}\sup_{r\geq  t}\left\lvert e^{-\alpha r}W^\prime(r)-\frac{\alpha  }{(1-\lambda \E[\tau e^{-\alpha \tau }])}\right\rvert \lim_{t\to \infty}\frac{1}{W(t)}\\&\quad +\frac{2}{\lambda}\lim_{t\to \infty}\left\lvert \frac{\alpha  }{W(t)(1-\lambda \E[\tau e^{-\alpha \tau }])}-\frac{\alpha (1-\lambda\E[\tau ]) }{1-\lambda \E[\tau e^{-\alpha \tau }]}\right\rvert
\intertext{By Equation \eqref{Equ-W-Lim}, $\lim_{t\to \infty }(1/W(t))=1-\lambda \E[\tau ]$, and by Proposition \ref{Prop-Lim-W-Prime},  $\lim_{r\to \infty } e^{-\alpha r}W^\prime(r)=\alpha/(1-\lambda \E[\tau e^{-\alpha \tau }])$ which implies $\lim_{t\to \infty}\sup_{r\geq  t}\left\lvert e^{-\alpha r}W^\prime(r)-\alpha /( 1-\lambda \E[\tau e^{-\alpha \tau }])\right\rvert=0 $. So,}&=0.
\end{align*}
\end{proof}

\begin{theorem}[Estimate of Extinction Time]\label{Thm-Gumbel Limit}
Consider a birth death process with non-lattice lifetime random variable $\tau $.  Assume that $\lambda \E[\tau ]<1$ (subcritical case) and \linebreak $\lambda\E[\tau e^{-\alpha  \tau }]\neq 1$. 
Assume Malthusian parameter $\alpha<0$ exists.  We consider a sequence of initial configurations  indexed by $k\in \N$. For each configuration $k$, the initial population is given by a  non-negative, random integer-valued measure $Z_k$ on $\R_+$, where $Z_k(r)$ represents the number of individuals with age in the interval $[0,r]$ at time zero.   
Assume that the total number of individuals at time zero, $M_k:=Z_k(\infty)$, tends to infinity almost surely as $k\to \infty$. Furthermore, assume that
\begin{equation}\label{Equ-Assum}
U:=\sup_{x\geq 0}\E[e^{-\alpha(\tau -x)}\mid \tau >x]<\infty
\end{equation}
and 
 the functions  $Z_k/M_k$ converge almost surely in  total variation distance  to  a non-decreasing random distribution  function  $B$, i.e. 
\[\lVert Z_k/M_k-B\rVert_{TV}:=\int_0^\infty \lvert  \d Z_k(  x)/M_k - \d B( x) \rvert \]
converges to zero almost surely  as $k\to \infty$.
Let
\[D:=\frac{1-\lambda \E[\tau]}{\lambda\E[\tau e^{-\alpha  \tau }]-1}\cdot\int_0^\infty\E\left[e^{-\alpha  (\tau -x)}-1\mid \tau >x\right] \d B(x)<\infty.\]
Let $T_k$ be the extinction time for configuration $k$. Then, as $k\to \infty$, the random variables $$-\alpha  T_k-\log( M_k)-\log(D)$$ converges in distribution to a Gumbel random variable $Z$ with distribution function $\P(Z\leq r)=e^{-e^{-r}}$. In other words, 
\[\lim_{k\to \infty}\P(-\alpha  T_k-\log( M_k)-\log(D)\leq r)=e^{-e^{-r}}.\]
\end{theorem}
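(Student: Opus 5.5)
Fix $r\in\R$ and set $t_k:=-(r+\log M_k+\log D)/\alpha$. Since $\alpha<0$ and $M_k\to\infty$, we have $t_k\to\infty$ almost surely. The event $\{-\alpha T_k-\log M_k-\log D\le r\}$ equals $\{T_k\le t_k\}$. Conditioning on the initial configuration $Z_k$ and using the product formula derived just before Proposition \ref{Thm-Solution}, we get
\[
\P(T_k\le t_k\mid Z_k)=\exp\Big(\int_0^\infty \log\big(1-p(x,t_k)\big)\,\d Z_k(x)\Big).
\]
The plan is to show that the exponent converges almost surely to $-e^{-r}$ and then apply dominated convergence, since the conditional probabilities lie in $[0,1]$. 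This gives $\P(T_k\le t_k)\to e^{-e^{-r}}$. We take $D>0$, which holds because $D$ is a $B$-average of the limit in Proposition \ref{Cor-lim-g2}(a), a limit of nonnegative quantities. The theorem implicitly assumes $D\neq 0$.

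\emph{Step 1 (uniform smallness).} By Proposition \ref{Cor-lim-g2}(b), $(1-F(x))p(x,t)e^{-\alpha(t+x)}$ converges to its limit uniformly in $x\ge0$. Dividing by $(1-F(x))e^{-\alpha x}$ would destroy the uniformity when $F(x)\to1$, so I would work instead with the conditional form. The key identity is that $(1-F(x))e^{-\alpha x}$ times $\E[e^{-\alpha(\tau-x)}-1\mid\tau>x]$ equals $\E[(e^{-\alpha\tau}-e^{-\alpha x})\1_{\{\tau>x\}}]$. To get a uniform statement in $x$ for $p(x,t)e^{-\alpha t}$ itself, I would go back to \eqref{Equ-p-in-terms-Wprime}. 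Write $W'(t+x-y)=e^{\alpha(t+x-y)}\big(c+\varepsilon(t+x-y)\big)$, where $c=\alpha/(1-\lambda\E[\tau e^{-\alpha\tau}])$ and $\sup_{r\ge t}|\varepsilon(r)|\to0$. This gives
\[
\Big|p(x,t)e^{-\alpha t}-\frac{c}{W(t)}\,g(x)\Big|\le \frac{\sup_{r\ge t}|\varepsilon(r)|}{W(t)}\,h(x),
\]
where $g(x)=\frac{e^{\alpha x}}{\lambda(1-F(x))}\big[1-\lambda\int_0^x(1-F)e^{-\alpha y}\d y\big]$ and $h(x)=\frac{e^{\alpha x}}{\lambda(1-F(x))}\big[1+\lambda\int_0^x(1-F)e^{-\alpha y}\d y\big]$. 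By Lemma \ref{Lemma-MidEqulality}, $g(x)$ is a constant multiple of $\E[e^{-\alpha(\tau-x)}-1\mid\tau>x]$, which is at most $U$. Using $\lambda\int_0^\infty(1-F)e^{-\alpha y}\d y=1$, we have $h(x)=2\frac{e^{\alpha x}}{\lambda(1-F(x))}-g(x)$. I expect the main obstacle to be bounding $e^{\alpha x}/(1-F(x))$ uniformly. The identity $\int_x^\infty(1-F)e^{-\alpha y}\d y=\frac{1}{\lambda}-\int_0^x(\cdots)$, combined with $\int_x^\infty(1-F(y))e^{-\alpha y}\d y=\frac{e^{-\alpha x}}{-\alpha}\E[e^{-\alpha(\tau-x)}-1\mid\tau>x](1-F(x))$, should express $h$ through $g$ and $U$. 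If this fails, I would fall back on (b) together with the fact that the mass of $Z_k$ where $1-F$ is small contributes little.

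Granting Step 1, we get $\sup_x p(x,t_k)\le C e^{\alpha t_k}\to0$. We also get $p(x,t_k)e^{-\alpha t_k}=\phi(x)+o(1)$ uniformly in $x$, where $\phi(x)$ is the limit in Proposition \ref{Cor-lim-g2}(a). The function $\phi$ is bounded by a constant times $U$.

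\emph{Step 2 (log linearization).} Since $\log(1-p)=-p+O(p^2)$ uniformly for small $p$, the exponent equals
\[
-M_ke^{\alpha t_k}\int_0^\infty \big(\phi(x)+o(1)\big)\,\frac{\d Z_k(x)}{M_k}\,\big(1+O(e^{\alpha t_k})\big).
\]
By the choice of $t_k$ we have $M_ke^{\alpha t_k}=e^{-r}/D$. The uniform $o(1)$ term integrates against a probability measure to $o(1)$. Because $\phi$ is bounded, the total variation convergence $Z_k/M_k\to B$ gives $\int\phi\,\d Z_k/M_k\to\int\phi\,\d B=D$. Hence the exponent tends to $-e^{-r}$ almost surely, and dominated convergence finishes the proof.
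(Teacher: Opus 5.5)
Your Step 1 does not give what Step 2 needs, and the repair you suggest cannot work.

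\textbf{Why Step 1 fails.} The Malthusian equation gives $\lambda\int_0^\infty(1-F(y))e^{-\alpha y}\,\d y=1$. So $e^{-\alpha x}(1-F(x))$ is integrable, and therefore $e^{\alpha x}/(1-F(x))$ is unbounded on $[0,\infty)$. This happens in every instance of the theorem, not only pathological ones.

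Your error weight has no cancellation. The bracket in $h$ is at least $1$, so $h(x)\ge e^{\alpha x}/(\lambda(1-F(x)))$, which is unbounded. The identity $1-\lambda\int_0^x(1-F)e^{-\alpha y}\,\d y=\lambda\int_x^\infty(1-F)e^{-\alpha y}\,\d y$ only exhibits the cancellation inside $g$. It cannot affect $h=2e^{\alpha x}/(\lambda(1-F(x)))-g$.

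Consequently, the triangle inequality applied to \eqref{Equ-p-in-terms-Wprime} gives neither $\sup_x p(x,t)\le Ce^{\alpha t}$ nor uniform convergence. Your fallback needs a bound on $M_kp(x,t_k)=e^{-r}D^{-1}p(x,t_k)e^{-\alpha t_k}$ on the region where $1-F$ is small, which is the same missing bound.

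You were right to be suspicious of this step: the paper's Part 1 breaks at exactly this point. It divides Proposition \ref{Cor-lim-g2}(b) by $(1-F(x))e^{-\alpha x}$ and then asserts $\varepsilon e^{\alpha x}/(1-F(x))\le\varepsilon\,\E[e^{-\alpha(\tau-x)}\mid\tau>x]\le\varepsilon U$. That cannot hold for all $x$, since the left side is unbounded.

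\textbf{The fix.} Bound $p$ through a representation whose terms are nonnegative, instead of the signed $W'$-formula. From the proof of Proposition \ref{Thm-Solution}, $p(x,t)=\E\bigl[1-\1_{\{\tau_R\le t\}}W(t-\tau_R)/W(t)\bigr]$ with $\tau_R\sim(\tau-x\mid\tau>x)$.

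By \eqref{Equ-W-Wprime} we have $0\le W'\le\lambda W$, and Proposition \ref{Prop-Lim-W-Prime} controls $W'$ at infinity. Hence $C_0:=\sup_{u\ge0}W'(u)e^{-\alpha u}<\infty$. For $\chi\le t$, using $W(t)\ge W(0)=1$,
\[1-\frac{W(t-\chi)}{W(t)}\le\int_{t-\chi}^{t}W'(u)\,\d u\le\frac{C_0}{-\alpha}\,e^{\alpha t}e^{-\alpha\chi},\]
and for $\chi>t$ trivially $1\le e^{\alpha t}e^{-\alpha\chi}$. Averaging over $\tau_R$ gives $\sup_{x\ge0}p(x,t)e^{-\alpha t}\le\max(1,C_0/(-\alpha))\,U$ for every $t\ge0$.

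With this bound, $\sup_x p(x,t_k)\to0$ and your log-linearization is uniform in $x$. You also do not need uniform convergence of $p(x,t_k)e^{-\alpha t_k}$. Split $\int p(x,t_k)e^{-\alpha t_k}\,\d Z_k/M_k-\int\phi\,\d B$ into two terms:
\begin{itemize}
\item a total-variation term, at most the uniform bound times $\lVert Z_k/M_k-B\rVert_{TV}$;
\item a $B$-term, which tends to $0$ by dominated convergence using the pointwise limit of Proposition \ref{Cor-lim-g2}(a).
\end{itemize}
This is the paper's Part 1/Part 2 decomposition, now with a valid Part 1 bound.
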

\begin{proof}
Since the extinction time for each individual's lineage is independent of the others, we have
\[\begin{aligned}\P(T_k\leq t)&
=\E\left[\Pi_{\{x: Z_k(x)\neq Z_k(x^-)\}}(1-p(x,t))^{Z_k(x)-Z_k(x^-)}\right]\\&=\E\left[\exp\left(\sum_{\{x: Z_k(x)\neq Z_k(x^-)\}}\log (1-p(x,t))\left(Z_k(x)-Z_k(x^-)\right)\right)\right]\\&=\E\left[\exp\left(\int_0^\infty\log\left(1-p\left(x,t\right)\right)\d Z_k( x)\right)\right]\end{aligned}\]

Therefore,
\begin{align*}\MoveEqLeft[3]\lim_{k\to \infty}\P\left( -\alpha T_k\leq \log(M_k)+\log(D)+r\right)
\\&=\lim_{k\to \infty}\E\left[\exp\left(\int_0^\infty\log\left(1-p\left(x,\frac{\log(M_k)+\log(D)+r}{-\alpha }\right)\right) \d Z_k( x)\right)\right]\end{align*}
Since inside above expectation is less than $1$, to prove Theorem, by Dominated Convergence Theorem, we have to show the following limit is zero:		
\begin{equation}\label{Equ-SplitLimit}\begin{split}
&\lim_{k\to \infty }\left\lvert\int_0^\infty\log\left(1-p\left(x,\frac{\log(M_k)+\log(D)+r}{-\alpha}\right)\right)\d  Z_k( x)+e^{-r}\right\rvert\\&\quad \leq \lim_{k\to \infty }\left\lvert\int_0^\infty M_k\log\left(1-p\left(x,\frac{\log(M_k)+\log(D)+r}{-\alpha}\right)\right) (\d Z_k( x)/M_k-\d B( x))\right\rvert\\&\qquad +\lim_{k\to \infty }\left\lvert\int_0^\infty M_k\log\left(1-p\left(x,\frac{\log(M_k)+\log(D)+r}{-\alpha}\right)\right)\d  B(x)+e^{-r}\right\rvert\end{split}
\end{equation}
 In parts 1,2 we show that first and second limit on the right hand side of \eqref{Equ-SplitLimit} is zero. 
 
\paragraph{Part 1: }First we show  the first limit on the right-hand side of \eqref{Equ-SplitLimit} is zero.  Since $\d Z_k/M_k$ converges to $\d B$ in   total variation distance, in order to show the first term in the right hand side of \eqref{Equ-SplitLimit} converges to zero, it is enough to show that there exists $K\in \N$ so that for all $x\geq 0$, $k\geq K$,  $$M_k\left\lvert \log\left(1-p\left(x,\frac{\log(M_k)+\log(D)+r}{-\alpha}\right)\right)\right\rvert\leq C$$
for some constant $C<\infty$.  Since $\lim_{k\to \infty }M_kD=\infty$, Proposition \ref{Cor-lim-g2} (b) implies for every arbitrary $\varepsilon>0$, and  $C_1:=\frac{(1-\lambda \E[\tau ])}{\lambda\E[\tau e^{-\alpha \tau }]-1}$,  there exists an integer $K$ such that for all  $k\geq K$, \begin{equation}\begin{split} M_kDe^{r}p\left(x,\frac{\log(M_k)+\log(D)+r}{-\alpha}\right)(1-F(x))e^{-\alpha x}&\leq \varepsilon+C_1\E[(e^{-\alpha \tau }-e^{-\alpha x})\1_{\{\tau>x\}}].\end{split}
	\end{equation}
	So,
	\begin{equation*}\begin{split} M_kDe^rp\left(x,\frac{\log(M_k)+\log(D)+r}{-\alpha}\right)&\leq  \varepsilon e^{\alpha x}/(1-F(x))+C_1\E[e^{-\alpha (\tau-x) }\mid \tau>x]\\&\leq (C_1+\varepsilon) \E[e^{-\alpha (\tau-x) }\mid \tau>x]\leq U(C_1+\varepsilon)=:C_2.\end{split}
	\end{equation*}
Since $\log(1-a)\leq a/(1-a)$, we get for $k\geq K$, 
  \begin{align}\label{Equ-BDD-Mlog}M_k\left\lvert \log\left(1-p\left(x,\frac{\log(M_k)+\log(D)+r}{-\alpha}\right)\right)\right\rvert\nonumber&\leq \frac{M_kp\left(x,\frac{\log(M_k)+\log(D)+r}{-\alpha}\right)}{1-p\left(x,\frac{\log(M_k)+\log(D)+r}{-\alpha}\right)}\\\nonumber & \leq \frac{C_2D^{-1}e^{-r}}{1-C_2 D^{-1}M_k^{-1}e^{-r}}\intertext{Since $M_k\to \infty $ as $k\to \infty$, there exists $K_2$ so that for  $k\geq K_2\vee K$, there holds $M_k^{-1}D^{-1}e^{-r}C_2\leq  1/2$. Therefore }&\leq C_2D^{-1}e^{-r}/2.\end{align}
Hence,
\begin{align*}\MoveEqLeft[2]\lim_{k\to \infty }\left\lvert\int_0^\infty M_k\log\left(1-p\left(x,\frac{\log(M_k)+\log(D)+r}{\theta}\right)\right) (\d Z_k( x)/M_k-\d B( x))\right\rvert\\&\leq e^{-r}D^{-1}(C_2/2)\lim_{k\to \infty }\int_0^\infty  \lvert \d Z_k( x)/M_k-\d B( x)\rvert\\&= e^{-r}D^{-1}(C_2/2) \lim_{k\to \infty}\lVert Z_k/M_k-B\rVert_{TV}=0.\end{align*}
\paragraph{Part 2:} For the second limit on the right-hand side of \eqref{Equ-SplitLimit},  by replacing $e^{-r}$ with  $$\frac{D^{-1}e^{-r}(1-\lambda \E[\tau])}{\lambda\E[\tau e^{\theta \tau }]-1}\cdot\int_0^\infty\E\left[e^{-\alpha (\tau -x)}-1\mid \tau >x\right] \d B( x)=e^{-r},$$
we get
\begin{align*}
&\lim_{k\to \infty }\left\lvert\int_0^\infty M_k\log\left(1-p\left(x,\frac{\log(D)+\log(M_k)+r}{-\alpha}\right)\right)\d B( x)+e^{-r}\right\rvert\\&=\lim_{k\to \infty }\Bigg\lvert\int_0^\infty M_k\log\left(1-p\left(x,\frac{\log(D)+\log(M_k)+r}{-\alpha}\right)\right) \d B( x)\\&\qquad \qquad+D^{-1}e^{-r}\frac{1-\lambda \E[\tau]}{\lambda\E[\tau e^{-\alpha \tau }]-1}\cdot\int_0^\infty\E\left[e^{-\alpha (\tau -x)}-1\mid \tau >x\right] \d B( x)\Bigg\rvert\\&\leq \lim_{k\to \infty}\frac{1}{De^r}\int_0^\infty \Bigg\lvert DM_ke^{r}\log\left(1-p\left(x,\frac{\log(D)+\log(M_k)+r}{-\alpha}\right)\right)\\&\qquad \qquad +\frac{(1-\lambda \E[\tau])\E\left[e^{-\alpha (\tau -x)}-1\mid \tau >x\right]}{\lambda\E[\tau e^{-\alpha \tau }]-1}\Bigg\rvert\d B( x)\intertext{The  inequalities \eqref{Equ-Assum}, \eqref{Equ-BDD-Mlog} imply the integrand in the above integral is bounded and we can apply Dominated Convergence Theorem,}
&=\frac{1}{De^r}\int_0^\infty
\lim_{k\to \infty} \Bigg\lvert DM_ke^{r}\log\left(1-p\left(x,\frac{\log(D)+\log(M_k)+r}{-\alpha}\right)\right)\\&\qquad \qquad + \frac{(1-\lambda \E[\tau])\E\left[e^{-\alpha (\tau -x)}-1\mid \tau >x\right]}{\lambda\E[\tau e^{-\alpha \tau }]-1}\Bigg\rvert \d B(x)
\end{align*}
Since $B(\infty)=1$, for part  2, it is enough to show that limit inside above integral is zero. We have
\begin{align*}& \lim_{k\to \infty} \left\lvert DM_ke^{r}\log\left(1-p\left(x,\frac{\log(D)+\log(M_k)+r}{-\alpha}\right)\right)+\frac{(1-\lambda \E[\tau])\E\left[e^{-\alpha (\tau -x)}-1\mid \tau >x\right]}{\lambda\E[\tau e^{-\alpha \tau }]-1}\right\rvert\\&=\lim_{k\to \infty}\Bigg\lvert DM_ke^{r}p\left(x,\frac{\log(D)+\log(M_k)+r}{-\alpha}\right)\frac{\log\left(1-p\left(x,\frac{\log(D)+\log(M_k)+r}{-\alpha}\right)\right)}{p\left(x,\frac{\log(D)+\log(M_k)+r}{-\alpha}\right)}\\&\qquad \qquad+\frac{(1-\lambda \E[\tau])\E\left[e^{-\alpha (\tau -x)}-1\mid \tau >x\right]}{\lambda\E[\tau e^{-\alpha \tau }]-1}\Bigg\rvert\\&=\lim_{t\to \infty}\left\lvert e^{-\alpha t}p(x,t)\frac{\log (1-p(x,t))}{p(x,t)}+\frac{(1-\lambda \E[\tau])\E\left[e^{-\alpha (\tau -x)}-1\mid \tau >x\right]}{\lambda\E[\tau e^{-\alpha \tau }]-1}\right\rvert\intertext{ Proposition \ref{Cor-lim-g2} (a) implies $\lim_{t\to \infty}p(x,t)=0$  and  since $\lim_{\delta \to 0}\log(1-\delta)/\delta=-1$, $\lim_{t \to \infty }\log(1-p(x,t))/p(x,t)=-1$. So,}\\&=\lim_{t\to \infty}\left\lvert e^{-\alpha t}p(x,t)-\frac{(1-\lambda \E[\tau])\E\left[e^{-\alpha (\tau -x)}-1\mid \tau >x\right]}{\lambda\E[\tau e^{-\alpha \tau }]-1}\right\rvert=0.\end{align*}
The last equality is obtained by Proposition   \ref{Cor-lim-g2} (a). 
\end{proof}

\begin{corollary}[Estimate of Extinction Time in  Exponential Ages Distribution]\label{Cor-Gumbel Limit}
	Consider a birth death process with birth and death rates $\lambda$ and $\mu$.  Assume that $\lambda /\mu<1$ (subcritical case). 
	Assume Malthusian parameter $\alpha=\lambda-\mu$ exists.  We consider a sequence of initial configurations  indexed by $k\in \N$. For each configuration $k$, the initial population is given by a  non-negative, random integer-valued measure $Z_k$ on $\R_+$, where $Z_k(r)$ represents the number of individuals with age in the interval $[0,r]$ at time zero.   
	Let
	\[D:=1-\lambda /\mu.\]
	Let $T_k$ be the extinction time for configuration $k$. Then, as $k\to \infty$, the random variables $$-\alpha  T_k-\log( M_k)-\log(D)$$ converges in distribution to a Gumbel random variable $Z$ with distribution function $\P(Z\leq r)=e^{-e^{-r}}$. In other words, 
	\[\lim_{k\to \infty}\P(-\alpha  T_k-\log( M_k)-\log(D)\leq r)=e^{-e^{-r}}.\]
\end{corollary}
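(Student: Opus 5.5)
The plan is to obtain Corollary \ref{Cor-Gumbel Limit} as a specialization of Theorem \ref{Thm-Gumbel Limit} to exponential lifetimes. Take $\tau\sim\mathrm{Exp}(\mu)$, i.e.\ $F(x)=1-e^{-\mu x}$ and constant hazard $\mu(x)=\mu$. Then verify each hypothesis of Theorem \ref{Thm-Gumbel Limit}, compute the constant $D$ explicitly, and read off the conclusion.

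First, the Malthusian parameter. Equation \eqref{Equ-malthus} reads $\lambda\mu/(\mu+\alpha)=\lambda-\alpha$, which gives $(\lambda-\alpha)(\mu+\alpha)=\lambda\mu$, i.e.\ $\alpha(\lambda-\mu-\alpha)=0$. Hence $\alpha=\lambda-\mu<0$ is the nonzero root, and it satisfies $\mu+\alpha=\lambda>0$, so $\E[e^{-\alpha\tau}]<\infty$. Subcriticality holds because $\lambda\E[\tau]=\lambda/\mu<1$, and $\tau$ has a density, so it is non-lattice. Next, $\lambda\E[\tau e^{-\alpha\tau}]=\lambda\mu/(\mu+\alpha)^2=\mu/\lambda>1$, so the condition $\lambda\E[\tau e^{-\alpha\tau}]\neq1$ holds.

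By memorylessness, $\E[e^{-\alpha(\tau-x)}\mid\tau>x]=\E[e^{-\alpha\tau}]=\mu/\lambda$ for every $x$. So the supremum $U$ in \eqref{Equ-Assum} equals $\mu/\lambda<\infty$. More importantly, the integrand in $D$ does not depend on $x$. Therefore, whatever the limiting distribution $B$ is (with $B(\infty)=1$),
\[D=\frac{1-\lambda/\mu}{\mu/\lambda-1}\cdot\Big(\frac{\mu}{\lambda}-1\Big)=1-\frac{\lambda}{\mu},\]
which matches the stated $D$.

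The main obstacle is that the corollary does not state the assumptions $M_k\to\infty$ and total-variation convergence of $Z_k/M_k$. These must be understood as inherited from Theorem \ref{Thm-Gumbel Limit}. In fact, in the exponential case the TV hypothesis is superfluous, and I would remark on this and, if desired, bypass it. By memorylessness, $p(x,t)=p(0,t)$ does not depend on $x$. So $\P(T_k\le t)=\E[(1-p(0,t))^{M_k}]$, and the theorem's argument reduces to Part 2 with a single point. The only requirement is $M_k p(0,t_k)\to D^{-1}De^{-r}\cdot$ const, which follows from Proposition \ref{Cor-lim-g2}(a) with $x=0$, together with $\log(1-p)/p\to-1$ and dominated convergence. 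Either route yields $\P(-\alpha T_k-\log M_k-\log D\le r)\to e^{-e^{-r}}$.
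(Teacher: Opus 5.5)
Your proposal is correct and follows the paper's route: specialize Theorem~\ref{Thm-Gumbel Limit} to $\tau\sim\mathrm{Exp}(\mu)$ and use memorylessness ($p(x,t)=p(0,t)$) to dispense with the total-variation hypothesis. Your explicit checks ($\alpha=\lambda-\mu$, $\lambda\E[\tau e^{-\alpha\tau}]=\mu/\lambda\neq 1$, $U=\mu/\lambda$, $D=1-\lambda/\mu$) and your remark that $M_k\to\infty$ a.s.\ must be inherited supply details the paper leaves implicit. Your direct argument via $\P(T_k\le t)=\E[(1-p(0,t))^{M_k}]$ is in fact the more robust justification: it makes Part 1 of the theorem's proof vanish identically and so avoids its uniform bound, whose term $\varepsilon e^{\alpha x}/(1-F(x))=\varepsilon e^{\lambda x}$ is unbounded here. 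Just fix the garbled limit in that paragraph, which should read $M_k\,p(0,t_k)\to e^{-r}$ because $e^{-\alpha t}p(0,t)\to D$.
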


\begin{proof}
Since the exponential random variable is memoryless, the ages of individuals at time zero are not affecting the future of process and there is no need that  functions  $Z_k/M_k$ converge almost surely in  total variation distance  to  a non-decreasing random distribution  function  $B$. So, Theorem \ref{Thm-Gumbel Limit} implies the corollary.
\end{proof}
Consider a sequence of  binary birth-death processes indexed by $N$, with population size $Z_N(t)$ at time $t$, birth rate $\lambda_N$, and death rate $\mu_N$ for $N \in \{1,2,\ldots\}$.  
Let $T_N$ denote the extinction time,
\[
T_N:= \inf\{t \ge 0 : Z_N(t) = 0\},
\]
with the convention $\inf(\emptyset) = \infty$.  
If $Z_N(0)(\mu_N - \lambda_N) \to \infty$ as $N \to \infty$, then, according to \cite[Page 469]{brightwell2018extinction},
\[
(\mu_N - \lambda_N) T_N - \left( \log Z_N(0) + \log(\mu_N - \lambda_N) - \log \mu_N \right) \to Z,
\]
in distribution as $N\to \infty$, where $Z$ is the standard Gumbel random variable which is consistent with Corollary \ref{Cor-Gumbel Limit}.
\paragraph{Acknowledgement} The author is deeply grateful to Prof. Malwina Luczak and Prof. Andrew Hazel for their constructive feedback and continuous support, which have \linebreak significantly enhanced this study. She also acknowledges the use of AI-assisted tools (Microsoft Copilot, Google Gemini, OpenAI ChatGPT, and Perplexity) for language refinement and for improving the clarity and comprehensiveness of the literature review.
\bibliographystyle{plain}
\bibliography{epid}

\end{document}